\newtheorem{theorem}{Theorem}[section]
\newtheorem{lemma}[theorem]{Lemma}
\newtheorem{corollary}[theorem]{Corollary}
\newcommand{\n}[1]{||#1||}
\theoremstyle{definition}
\newtheorem{definition}[theorem]{Definition}
\newtheorem{example}[theorem]{Example}
\newtheorem{conjecture}[theorem]{Conjecture}
\newtheorem*{solution*}{Solution}
\theoremstyle{remark}
\newtheorem{remark}[theorem]{Remark}
\newtheorem{claim}[theorem]{Claim}
\newtheorem*{pf*}{Pf}
\newtheorem*{pfbase*}{Proof of Base Case}
\newtheorem*{pfstep*}{Proof of Inductive Step}
\numberwithin{equation}{section}
\newcommand{\C}{\mathbb{C}}
\newcommand{\Z}{\mathbb{Z}}
\newcommand{\U}{\mathscr{U}}
\newcommand{\M}{\mathscr{M}}
\begin{document}

\allowdisplaybreaks

\title[Homeomorphisms on Homogeneous Spaces]{A Class of Homeomorphisms on Homogeneous Spaces of a Group Action}

\author{Samuel A. Hokamp}
\address{Department of Mathematics, Sterling College, Sterling, Kansas, 67579}
\email{samuel.hokamp@sterling.edu}

\subjclass[2010]{Primary 43A85. Secondary 22C05}

\date{\today}

\keywords{Group action, homogeneous space, locally convex topological vector space.\\\indent\emph{Corresponding author.} Samuel A. Hokamp \Letter~\href{mailto:samuel.hokamp@sterling.edu}{samuel.hokamp@sterling.edu}. \phone~920-634-7356.}

\begin{abstract}
We develop a class of homeomorphisms on a compact homogeneous space of a transitive group action and show how the class sheds new light on a decomposition problem. We further use this class to show that every such homogeneous space in a locally convex topological vector space which is also convex must necessarily be trivial, ie. a singleton set. Additionally, this class of homeomorphisms allows us to relate the induced group action on the space of continuous functions to the action on the homogeneous space. %We lastly explore the role this class plays in establishing the decomposition of invariant spaces of continuous and measurable functions into orthogonal direct sums of minimal invariant spaces.
\end{abstract}

\maketitle

%%%%%%%%%%%%%%%%%%%%%%%%%%%%%%%%%%%%%%%%%
\section{Introduction}
%%%%%%%%%%%%%%%%%%%%%%%%%%%%%%%%%%%%%%%%%

In \cite{NR}, Nagel and Rudin establish a decomposition theorem for the closed unitarily invariant spaces of continuous and measurable functions on the unit sphere of $\mathbb{C}^n$. This work can also be found in \cite{RFT}. Using the work of Nagel and Rudin as an inspiration, we conjecture the following. Note that $C(X)$ denotes the continuous complex functions with domain $X$.

\begin{conjecture}\label{conjecture}
Let $G$ be a compact group acting continuously and transitively on a compact Hausdorff space $X$. Then there exists a %unique
collection $\mathscr{G}$ of closed, pairwise orthogonal, $G$-minimal subspaces of $C(X)$ induced by the group action such that any closed $G$-invariant subspace of $C(X)$ is the closure of the direct sum of some subcollection of $\mathscr{G}$.
\end{conjecture}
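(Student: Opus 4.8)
The plan is to pass to the standard model $X \cong G/H$, where $H$ is the stabilizer of a chosen base point, and to run the Peter--Weyl machinery for the compact group $G$. First I would record that transitivity gives a unique $G$-invariant probability measure $\mu$ on $X$, so $L^2(X,\mu)$ carries a unitary representation of $G$ and $C(X)$ sits inside it as a dense $G$-invariant subspace. By Peter--Weyl, $L^2(X,\mu) = \bigoplus_{\pi \in \widehat G} M_\pi$ as a Hilbert-space direct sum of the isotypic components $M_\pi$, each finite dimensional of dimension $d_\pi m_\pi$, where $d_\pi = \dim V_\pi$ and $m_\pi = \dim V_\pi^H$ is the multiplicity. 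Since matrix coefficients are continuous, each $M_\pi \subset C(X)$; the algebraic sum $C(X)_{\mathrm{fin}} := \bigoplus_\pi M_\pi$ of $G$-finite vectors is then a conjugation-closed subalgebra of $C(X)$ containing the constants, and I would show it is dense by the approximate-identity argument: pick a net $(u_\alpha)$ of $G$-finite functions in $C(G)$ forming an approximate identity for convolution; then the averages $u_\alpha \cdot f := \int_G u_\alpha(g)\,(g\cdot f)\,dg$ are $G$-finite and converge uniformly to $f$ (this also shows $C(X)_{\mathrm{fin}}$ separates points, so Stone--Weierstrass gives density as well).

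I would then define the candidate family: for each $\pi \in \widehat G$, fix once and for all a decomposition $M_\pi = E_{\pi,1} \oplus \cdots \oplus E_{\pi,m_\pi}$ into mutually $L^2$-orthogonal irreducible $G$-submodules, and put $\mathscr{G} = \{ E_{\pi,i} : \pi \in \widehat G,\ 1 \le i \le m_\pi \}$. Each $E_{\pi,i}$ is finite dimensional, hence closed in $C(X)$, and $G$-minimal; members from inequivalent $\pi$ are automatically orthogonal, and those from the same $\pi$ are orthogonal by construction. Now let $Y \subseteq C(X)$ be a closed $G$-invariant subspace. The isotypic projection $E_\pi f := d_\pi \int_G \overline{\chi_\pi(g)}\,(g\cdot f)\,dg$ is a uniform limit of convex combinations of translates $g\cdot f \in Y$, so $E_\pi(Y) \subseteq Y$; hence $Y \cap M_\pi = E_\pi(Y)$ and $Y_{\mathrm{fin}} := Y \cap C(X)_{\mathrm{fin}} = \bigoplus_\pi (Y \cap M_\pi)$. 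Running the approximate-identity argument inside $Y$ shows $Y_{\mathrm{fin}}$ is dense in $Y$, so $Y = \overline{\bigoplus_\pi (Y \cap M_\pi)}$ and the whole problem reduces to understanding the finite-dimensional submodules $Y \cap M_\pi \subseteq M_\pi$.

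This last step is where I expect the real difficulty to lie. If $m_\pi \le 1$ for every $\pi$ --- i.e.\ $(G,H)$ is a Gelfand pair and the action is multiplicity-free --- then $Y \cap M_\pi$ is either $0$ or all of $M_\pi = E_{\pi,1}$, so $Y = \overline{\bigoplus \{ E_{\pi,1} : Y \cap M_\pi \ne 0 \}}$ and the conjecture follows; this already recovers the result of Nagel and Rudin \cite{NR}, since $\big(U(n),U(n-1)\big)$ is a Gelfand pair and the harmonic bidegree spaces $H(p,q)$ are precisely the multiplicity-one isotypic components on the sphere. But as soon as some $m_\pi \ge 2$, the $G$-submodules of $M_\pi \cong V_\pi \otimes \mathbb{C}^{m_\pi}$ are the subspaces $V_\pi \otimes W$ with $W \subseteq \mathbb{C}^{m_\pi}$ arbitrary, and no fixed finite list of ``coordinate'' subspaces $E_{\pi,i}$ can exhibit every such $W$ as a span of a sublist; so the conjecture in its literal form either fails or must be read as allowing $\mathscr{G}$ (or the notion of ``subcollection'') to depend on $Y$. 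Resolving this --- equivalently, controlling the commutant of the $G$-action on $C(X)$ and showing the multiplicity spaces that actually occur are sufficiently rigid --- is the crux, and it is exactly the point at which I would expect the paper's class of homeomorphisms of $X$ to supply the extra leverage: additional intrinsic symmetries of $X$ beyond the $G$-action should constrain which submodules $Y \cap M_\pi$ can arise, ideally pinning down a canonical orthogonal basis of each multiplicity space relative to which every admissible $W$ is a coordinate subspace.
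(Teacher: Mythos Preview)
There is no proof in the paper for you to compare against: Conjecture~\ref{conjecture} is stated as a conjecture, and the paper explicitly says in Section~\ref{Decomp} that ``Conjecture~\ref{conjecture} in its full generality remains unproven.'' What the paper does prove is only the existence part (Theorem~\ref{collection}), namely that Peter--Weyl produces a collection $\mathscr{G}$ of closed, pairwise orthogonal, $G$-minimal subspaces of $C(X)$ whose $L^2$-sum is all of $L^2(\mu)$; this matches the first block of your outline. The decomposition assertion---that every closed $G$-invariant $Y\subset C(X)$ is the closure of a direct sum of members of a \emph{fixed} $\mathscr{G}$---is not established.

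Your analysis of the obstruction is sharper than anything in the paper. You correctly reduce to the isotypic components and observe that when some multiplicity $m_\pi\ge 2$ the $G$-submodules of $M_\pi\cong V_\pi\otimes\mathbb{C}^{m_\pi}$ are $V_\pi\otimes W$ for arbitrary $W\subset\mathbb{C}^{m_\pi}$, so no fixed orthogonal splitting of $M_\pi$ can realize all of them as sums of pieces. This is not a gap you can close; it is a genuine counterexample schema to the conjecture as literally phrased, and it explains why the Nagel--Rudin case succeeds (the pair $(U(n),U(n-1))$ is Gelfand, hence multiplicity-free). Your expectation that the paper's homeomorphisms $\phi_x$ supply the missing rigidity is not borne out: what the paper extracts from them (Corollary~\ref{main corollary}) is that every stabilizer of a point acts trivially on $C(X)$, which forces $H(x)=C(X)$ and hence collapses the auxiliary condition from \cite{hokampconti} to the requirement $\dim H=1$ for all $H\in\mathscr{G}$. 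In other words, the homeomorphisms show the earlier approach only handled the one-dimensional (in particular, abelian) situation; they do not constrain the multiplicity spaces in higher dimension.
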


\noindent Definitions for \textit{$G$-minimal} and \textit{$G$-invariant} can be found in Section \ref{prelim}.

The existence of a collection $\mathscr{G}$ with the desired qualities is a result of the Peter-Weyl Theorem from \cite{peterweyl}. This is shown in Section \ref{Decomp} (Theorem \ref{collection}).
Recently in \cite{hokampconti}, the author proved Conjecture \ref{conjecture} subject to a condition on the collection $\mathscr{G}$. An attempt to drop this condition led to the study of the titular class of homeomorphisms on $X$ and the variety of subsequent results established in this paper.

Section \ref{Homies} is devoted to defining the class of homeomorphisms, establishing their properties, and proving some immediate results, the most important of which is the following:

\begin{theorem}\label{stabilizer thm}
If $\alpha\in G$ stabilizes any element of $X$, then $\alpha$ stabilizes every continuous function on $X$.
\end{theorem}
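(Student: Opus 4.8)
The plan is to recast the statement in terms of the homeomorphisms constructed in Section~\ref{Homies}. Writing the induced action of $G$ on $C(X)$ through that class, each $g\in G$ acts on a function $f\in C(X)$ by composition with (the inverse of) the homeomorphism of $X$ that the construction attaches to $g$. Consequently the conclusion ``$\alpha$ stabilizes every $f\in C(X)$'' is equivalent — since $C(X)$ separates the points of the compact Hausdorff space $X$ by Urysohn's lemma — to the assertion that the homeomorphism of $X$ determined by $\alpha$ is the identity map $\mathrm{id}_X$. So the theorem reduces to the following purely topological claim: if $\alpha$ fixes some point of $X$, then the homeomorphism it induces on $X$ is $\mathrm{id}_X$.

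To prove that claim I would first take a point $x_1\in X$ with $\alpha\cdot x_1=x_1$ and treat it as the reference point of the construction in Section~\ref{Homies} — either because the construction may be recentred at any point of $X$, or because the equivariance properties recorded there let one move the reference point by a group element. With $x_1$ as reference, the defining properties of the homeomorphism class should force the homeomorphism attached to $\alpha$ to fix $x_1$. Then I would invoke transitivity: an arbitrary $x\in X$ has the form $x=g\cdot x_1$, and the compatibility of the construction with the $G$-action propagates the equality from $x_1$ to $x$, so the induced homeomorphism fixes every point of $X$. Feeding this back through the defining relation for the induced action on $C(X)$ gives $\alpha\cdot f=f$ for all $f\in C(X)$, which is the theorem.

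I expect the crux to be exactly the passage from ``$\alpha$ fixes one point'' to ``the induced homeomorphism is globally trivial.'' A priori the homeomorphism attached to $\alpha$ depends on $\alpha$ together with a chosen reference point, and collapsing it to $\mathrm{id}_X$ everywhere is not formal: it must use both transitivity of the action and the equivariance of the construction from Section~\ref{Homies} in an essential way, and one has to check that these genuinely transport the fixed-point information off the coset $\{g\in G: g\cdot x_1=x_1\}$ of $\alpha$ and onto all of $X$. Once that is settled, the remainder of the argument is routine bookkeeping with the definitions, which is presumably why the author can advertise it as an immediate consequence of the machinery.
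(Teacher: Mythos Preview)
Your Urysohn reformulation is correct: the statement is equivalent to showing that $\varphi_\alpha=\mathrm{id}_X$ whenever $\alpha$ fixes some point. But the proposal never supplies the mechanism that makes this true, and there is a misreading of Section~\ref{Homies}: the construction there does \emph{not} attach a homeomorphism to each $g\in G$; it attaches $\phi_x$ to each \emph{point} $x\in X$, defined by $\phi_x(\gamma x)=\gamma^{-1}x$. The action of $G$ on $C(X)$ is still the ordinary one through $\varphi_\alpha$; the $\phi_x$ are auxiliary.

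The step you flag as the crux --- propagating the fixed point from $x_1$ to all of $X$ via ``transitivity and equivariance'' --- does not go through as stated. Equivariance in the paper's sense is the conjugation relation $\phi_{\alpha x}=\varphi_\alpha\circ\phi_x\circ\varphi_{\alpha^{-1}}$ (Theorem~\ref{properties}(3)); specializing to $\alpha x_1=x_1$ only gives that $\varphi_\alpha$ commutes with $\phi_{x_1}$, which does not force $\varphi_\alpha=\mathrm{id}_X$. Likewise, bare transitivity gives $\alpha(\beta x_1)=\beta x_1$ iff $\beta^{-1}\alpha\beta\in S(x_1)$, and you have no way to conclude that from $\alpha\in S(x_1)$ alone. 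What actually carries the argument in the paper is the \emph{inversion} built into $\phi_{x_1}$: for $y=\beta x_1$,
\[
\phi_{x_1}(\varphi_\alpha(y))=\phi_{x_1}(\alpha\beta x_1)=(\alpha\beta)^{-1}x_1=\beta^{-1}\alpha^{-1}x_1=\beta^{-1}x_1=\phi_{x_1}(y),
\]
and injectivity of $\phi_{x_1}$ then gives $\varphi_\alpha(y)=y$. Equivalently (and this is literally how the paper phrases it), Theorem~\ref{operator properties}(3) says $(U_xf)\circ\varphi_\alpha=U_xf$ when $\alpha x=x$; since $U_x$ is its own inverse, replacing $f$ by $U_xf$ yields $f\circ\varphi_\alpha=f$ for every $f\in C(X)$. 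Your detour through Urysohn is then unnecessary. The missing ingredient in your outline is precisely this inversion identity, not generic equivariance.
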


An interesting consequence of this result is the following relationship between the actions of $G$ on $X$ and $G$ on $C(X)$:

\begin{theorem}\label{actions}
If the action of $G$ on $C(X)$ is faithful, then the action of $G$ on $X$ is free.
\end{theorem}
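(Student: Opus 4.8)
The plan is to deduce this as a short contrapositive argument resting entirely on Theorem~\ref{stabilizer thm}, which carries all of the substantive content. Recall that the action of $G$ on $X$ is \emph{free} when the identity $e$ is the only element of $G$ fixing any point of $X$, and the induced action of $G$ on $C(X)$ is \emph{faithful} when $e$ is the only element of $G$ that acts as the identity transformation on $C(X)$. So I would begin by assuming the negation of the conclusion: that the action of $G$ on $X$ is not free. Then there exist $x\in X$ and $\alpha\in G$ with $\alpha\neq e$ such that $\alpha$ stabilizes $x$.

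Next I would apply Theorem~\ref{stabilizer thm} directly to $\alpha$: since $\alpha$ stabilizes the point $x\in X$, it stabilizes every $f\in C(X)$. The one point that needs to be checked — and it is routine given the definition of the induced action in Section~\ref{prelim} — is that ``$\alpha$ stabilizes every continuous function on $X$'' says exactly that $\alpha$ induces the identity map on $C(X)$. Consequently $\alpha$ lies in the kernel of the homomorphism from $G$ into the group of invertible transformations of $C(X)$ that defines the induced action.

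Finally, since $\alpha\neq e$ sits in that kernel, the induced action of $G$ on $C(X)$ is not faithful. This is precisely the contrapositive of the asserted implication, so the theorem follows. I do not anticipate any real obstacle here: all of the analytic work has already been done in Theorem~\ref{stabilizer thm}, and what remains is only careful bookkeeping with the definitions. The only subtlety worth flagging is to ensure that ``free'' is being used in the sense that every point stabilizer is trivial (rather than merely that the action is effective), and that ``$\alpha$ stabilizes every continuous function'' and ``$\alpha$ acts trivially on $C(X)$'' are identified correctly; once those are pinned down the argument is immediate.
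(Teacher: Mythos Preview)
Your proposal is correct and is essentially the paper's own argument phrased contrapositively: the paper assumes the action on $C(X)$ is faithful, takes $\alpha$ with $\alpha x=x$, invokes Corollary~\ref{main corollary} (equivalently Theorem~\ref{stabilizer thm}) to get $f=f\circ\varphi_\alpha$ for all $f\in C(X)$, and concludes $\alpha=e$ by faithfulness. The only cosmetic difference is the direction of the implication you chose to prove.
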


In Section \ref{Decomp}, we revisit Conjecture \ref{conjecture} as discussed \cite{hokampconti}, wherein Corollary \ref{main corollary} of Section~\ref{Homies} reveals that the condition imposed in \cite{hokampconti} is more restrictive than originally thought, ultimately requiring that each space in $\mathscr{G}$ have dimension one, in which case Conjecture \ref{conjecture} is easily proven (Theorem 4.1 of \cite{hokampconti}).

Finally, the class of homeomorphisms also changes the perception of the general compact homogeneous space $X$. As a compact set, the natural inclination is to envision $X$ as a closed ball in $\mathbb{C}^n$, in part because the analogous work of Nagel and Rudin was on the sphere in $\mathbb{C}^n$, but this turns out to be the wrong image due to the ball's convexity:

\begin{theorem}\label{main result}
Let $X$ be a compact subset of a locally convex topological vector space, and $G$ a compact group acting continuously and transitively on $X$. If $X$ is convex, then $X$ is a singleton set.
\end{theorem}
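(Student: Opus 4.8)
The plan is to assume $X$ has at least two points and derive a contradiction from two ingredients: Theorem~\ref{stabilizer thm} (combined with the elementary fact, via Urysohn's lemma, that $C(X)$ separates the points of the compact Hausdorff space $X$), and the Schauder--Tychonoff fixed-point theorem, which asserts that every continuous self-map of a nonempty compact convex subset of a locally convex topological vector space has a fixed point. The whole point of the argument is that transitivity together with Theorem~\ref{stabilizer thm} manufactures a \emph{fixed-point-free} self-homeomorphism of $X$, whereas convexity forbids one.

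The key step I would isolate first is the following: \emph{if $g\in G$ moves some point of $X$, then $g$ fixes no point of $X$ whatsoever.} To see this, suppose $gx_0\neq x_0$ but $gy=y$ for some $y\in X$. Then $g$ stabilizes an element of $X$, so Theorem~\ref{stabilizer thm} gives that $g$ stabilizes every $f\in C(X)$; unwinding the definition of the $G$-action on functions, this says $f(gx)=f(x)$ for all $x\in X$ and all $f\in C(X)$. In particular $f(gx_0)=f(x_0)$ for every $f\in C(X)$, and since continuous functions separate points on $X$ this forces $gx_0=x_0$, contradicting the choice of $x_0$. Hence the homeomorphism $x\mapsto gx$ of $X$ has no fixed point.

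To finish, suppose $X$ is not a singleton and choose distinct $x_0,x_1\in X$. By transitivity there is $g\in G$ with $gx_0=x_1\neq x_0$, so by the previous step $x\mapsto gx$ is a continuous fixed-point-free self-map of $X$. But $X$ is a nonempty compact convex subset of a locally convex topological vector space, so the Schauder--Tychonoff theorem guarantees that $x\mapsto gx$ has a fixed point --- a contradiction. Therefore $X$ is a singleton.

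I do not anticipate a serious obstacle here, because the substantive content has already been deposited in Theorem~\ref{stabilizer thm}; what remains is the translation of ``stabilizes every continuous function'' into ``fixes every point'' via separation of points, plus a citation of the correct Brouwer-type fixed-point theorem in the locally convex setting. It is worth remarking on why the naive route fails: since $G$ acts on $X$ merely by homeomorphisms and not by affine (or linear) maps, one cannot average an orbit against Haar measure and pass to a barycenter to produce a $G$-fixed point of $X$ directly --- the fixed-point theorem must instead be applied to a single, carefully chosen homeomorphism, and Theorem~\ref{stabilizer thm} is precisely what identifies which one to use and certifies that it is fixed-point-free.
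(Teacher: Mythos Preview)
Your proof is correct and is essentially the contrapositive of the paper's own argument: the paper applies Schauder--Tychonoff first to give every $\varphi_\alpha$ a fixed point, then invokes Corollary~\ref{main corollary} and transitivity to force every $f\in C(X)$ to be constant, and concludes via Urysohn that $X$ is a singleton. You use exactly the same three ingredients (Schauder--Tychonoff, Theorem~\ref{stabilizer thm}/Corollary~\ref{main corollary}, and separation of points by $C(X)$) in the reverse order, producing a fixed-point-free $\varphi_g$ instead; the content is identical.
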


\noindent That is, in a locally convex topological vector space, there exist no nontrivial compact homogeneous spaces that are convex. The proof of Theorem \ref{main result} is given in Section \ref{Convex Spaces}.

\section{Preliminaries}\label{prelim}
%%%%%%%%%%%%%%%%%%%%%%%%%%%%%%%%%%%%%%%%%

%%%%%%%%%%%%%%%%%%%%%%%%%%%%%%%%%%%%%%%%%
%\subsection{Notation and Definitions}
%%%%%%%%%%%%%%%%%%%%%%%%%%%%%%%%%%%%%%%%%

We begin by presenting some standard definitions concerning group actions. Let $X$ be a set and $G$ be a group that acts on $X$. That is, there exists a map $$\Phi:G\times X\to X$$ with the following properties:

\begin{itemize}
    \item[(1)] $\Phi(e,x)=x$ for all $x\in X$, where $e\in G$ is the identity element, and
    \item[(2)] $\Phi(\alpha,\Phi(\beta,x))=\Phi(\alpha\beta,x)$ for all $x\in X$ and $\alpha,\beta\in G$.
\end{itemize}

When we wish to be explicit, the map $\varphi_\alpha:X\to X$ shall denote the action of the element $\alpha\in G$ on $X$; that is, $\varphi_\alpha(x)=\Phi(\alpha,x)$ for $x\in X$. When there is no confusion, we shall write $\varphi_\alpha(x)$ simply as $\alpha x$.

Below are some basic properties of group actions found in a variety of classical texts.

\begin{definition}
Let $G$ be a group acting on a set $X$.
\begin{itemize}
    \item The action is \textit{faithful} if $\alpha x=x$ for all $x\in X$ implies that $\alpha=e$.
    \item The action is \textit{free} if $\alpha x=x$ for some $x\in X$ implies that $\alpha=e$.
    \item The action is \textit{transitive} if for each $x,y\in X$, there exists $\alpha\in G$ such that $y=\alpha x$.
\end{itemize}
\end{definition}

In proceeding, $G$ shall be a compact group with Haar measure $m$. We say that $G$ acts \textit{continuously} on a topological space $X$ if the map $\Phi$ is continuous with respect to the product topology. When $G$ acts continuously and transitively on a compact Hausdorff space $X$, each $\varphi_\alpha$ is a homeomorphism of $X$, and we say $X$ is a \textit{homogeneous space}. So $X$ shall be throughout the paper.

Let $C(X)$ denote the space of continuous complex functions with domain $X$ and uniform norm $\n{\cdot}$. The action of $G$ on $X$ induces an action of $G$ on $C(X)$ via the map $$(\alpha,f)\mapsto f\circ\varphi_\alpha^{-1}.$$ Note that this map need not be continuous, but it can be shown to be \textit{separately continuous}; that is, each map induced by fixing a component element is continuous: for $\alpha\in G$, the map $f\mapsto f\circ\varphi_\alpha^{-1}$ is a bijective linear isometry on $C(X)$, and for $f\in C(X)$, the continuity of the map $\alpha\mapsto f\circ\varphi_\alpha^{-1}$ is a consequence of Lemma~5.1 of \cite{hokampconti}. %In this case, we say the action is \textit{strongly continuous}, despite the condition being weaker.

Throughout the paper, $\mu$ shall denote the unique regular Borel probability measure on $X$ that is invariant under the action of $G$. Specifically,
\begin{equation}\label{measinv}
    \int_X f\,d\mu=\int_X f\circ\varphi_\alpha\,d\mu,
\end{equation}
for all $f\in C(X)$ and $\alpha\in G$. The existence of such a measure is a result of Andr\'e Weil from \cite{weil1940}. A construction of $\mu$ can be found in \cite{joys} (Theorem~6.2), but existence can also be established non-constructively using the Riesz Representation Theorem (for reference, Theorem 6.19 \cite{RRC}). Since this technique will be useful to us in Section \ref{Homies}, we provide the result and its proof as Theorem \ref{G meas} in the appendix (Section \ref{appendix}).

For $1\leq p\leq\infty$, the notation $L^p(\mu)$ shall denote the usual Lebesgue spaces of measurable functions on $X$. %, with norms $\n{\cdot}_p$. %For $Y\subset C(X)$, the uniform closure of $Y$ is denoted $\overline{Y}$, and for $Y\subset L^p(\mu)$, the norm-closure of $Y$ in $L^p(\mu)$ is denoted $\overline{Y}^{p}$.

%The following is an easy consequence of \eqref{measinv}:

%\begin{remark}\label{switcher-mu}
%Let $1\leq p<\infty$ and let $p^\prime$ be its conjugate exponent. Then $$\int_X (f\circ\varphi_\alpha)\cdot g\,d\mu=\int_X f\cdot (g\circ\varphi_{\alpha^{-1}})\,d\mu,$$ for $f\in L^p(\mu)$, $g\in L^{p^\prime}(\mu)$, and $\alpha\in G$.
%\end{remark}

To discuss the proof from \cite{hokampconti} in Section \ref{Decomp}, we need the following definitions:

\begin{definition}
A space of complex functions $Y$ defined on $X$ is \textbf{invariant under $G$} (\textbf{$G$-invariant}) if $f\circ \varphi_\alpha\in Y$ for every $f\in Y$ and every $\alpha\in G$.
\end{definition}

\begin{remark}
Since the action of $G$ on $X$ is continuous, $C(X)$ is $G$-invariant. Conversely, if $C(X)$ is $G$-invariant, then each action $\varphi_\alpha$ must be continuous.
%It can be shown that each $\varphi_\alpha$ is continuous if and only if $C(X)$ is $G$-invariant. Since this is desirable, we require each $\varphi_\alpha$ to be continuous. %Consequently each $\varphi_\alpha$ is an open map as well.
\end{remark}

\begin{remark}
Explicitly, the invariance property \eqref{measinv} means $\mu(\alpha E)=\mu(E)$ for every Borel set $E$ and every $\alpha\in G$. Consequently, property \eqref{measinv} holds for every $L^p$ function, and thus $L^p(\mu)$ is $G$-invariant for all $1\leq p\leq\infty$.
\end{remark}

%\begin{definition}[12.2.4 \cite{RFT}]
%If $Y$ is $G$-invariant and $T$ is a linear transformation on $Y$, we say $T$ \textbf{commutes with} $G$ if $$T(f\circ\varphi_\alpha)=(Tf)\circ\varphi_\alpha$$ for every $f\in Y$ and every $\alpha\in G$.
%\end{definition}

\begin{definition}
A space $Y\subset C(X)$ is \textbf{$G$-minimal} if it is $G$-invariant and contains no nontrivial $G$-invariant spaces.
\end{definition}

Finally, all classical results used in the paper can be found in a variety of texts, with the reference given being only one option.

\section{A Class of Homeomorphisms on Homogeneous Spaces}\label{Homies}
%%%%%%%%%%%%%%%%%%%%%%%%%%%%%%%%%%%%%%%%%%

In this section we establish the titular class of homeomorphisms on $X$. Each homeomorphism $\phi_x$ of this class is induced by an element of $x\in X$ and is constructed as follows. Fix $x\in X$. We define the set $$S(x)=\{\alpha\in G:\alpha x=x\}.$$ This is the set of elements in $G$ that stabilize $x$, which is a closed subgroup of $G$ called the \textit{isotopy subgroup} for $x$. A theorem of Weil states that $X$ is homeomorphic to $G/S(x)$ with the quotient topology (for reference, Theorem 6.1 of \cite{joys}), the homeomorphism $\phi$ given by $$\phi(\alpha x)=\alpha S(x).$$ Note the importance of transitivity here. Further, the map $\iota:G/S(x)\to G/S(x)$ given by $$\iota(\alpha S(x))=\alpha^{-1}S(x)$$ is a homeomorphism on $G/S(x)$ in the quotient topology\footnote{In fact, using the strong continuity of the quotient map, one can show that if $G$ is a topological group and $H$ a subgroup, then for any homeomorphism $f$ of $G$, the map $$xH\mapsto f(x)H$$ is a homeomorphism of $G/H$.}. We compose the homeomorphisms $\phi$, $\iota$, and $\phi^{-1}$ together to get the homeomorphism $\phi_x$: $$\phi_x=\phi^{-1}\circ\iota\circ\phi.$$ In particular, for all $\alpha\in G$, $$\phi_x(\alpha x)=\alpha^{-1}x,$$ so that $\phi_x(y)=\beta^{-1}x$, where $y\in X$ is such that $y=\beta x$.

Theorem \ref{properties} now establishes some basic properties of the maps $\phi_x$:

\begin{theorem}\label{properties}
For each $x\in X$, let $\phi_x$ denote the corresponding homeomorphism defined previously. Then,
\begin{itemize}
    \item[(1)] $\phi_x(x)=x$.
    \item[(2)] $\phi_x^{-1}=\phi_x$.
    \item[(3)] For all $\alpha\in G$, $\phi_{\alpha x}=\varphi_\alpha\circ\phi_x\circ\varphi_{\alpha^{-1}}$
    \item[(4)] For all $\alpha\in G$, $\phi_{\alpha x}=\phi_x\circ\phi_{\alpha^{-1}x}\circ\phi_x$.
\end{itemize}
\end{theorem}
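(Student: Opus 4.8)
The plan is to argue entirely at the level of points of $X$, using the explicit formula $\phi_x(\alpha x)=\alpha^{-1}x$ for $\alpha\in G$ together with transitivity, which lets us write every point of $X$ as $\gamma\cdot x$ for some $\gamma\in G$. The single computational fact from which everything else follows is the identity
\begin{equation*}
\phi_{\beta x}(\gamma x)=(\beta\gamma^{-1}\beta)\cdot x\qquad(\beta,\gamma\in G),
\end{equation*}
obtained by rewriting $\gamma x=(\gamma\beta^{-1})\cdot(\beta x)$ and applying the defining property $\phi_z(g\cdot z)=g^{-1}\cdot z$ at the base point $z=\beta x$ with $g=\gamma\beta^{-1}$. (That $\phi_{\beta x}$ is well defined independently of the representative chosen is part of the construction recalled before the statement.)

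Parts (1) and (2) are then immediate. For (1), take $\alpha=e$ in $\phi_x(\alpha x)=\alpha^{-1}x$. For (2), apply $\phi_x$ twice: $\phi_x(\phi_x(\gamma x))=\phi_x(\gamma^{-1}x)=\gamma x$; since $\gamma$ ranges over all of $G$, transitivity gives $\phi_x\circ\phi_x=\mathrm{id}_X$, and as $\phi_x$ is a bijection this is exactly the statement $\phi_x^{-1}=\phi_x$.

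For (3) and (4), I would verify equality of the two homeomorphisms by evaluating each side at an arbitrary point $\gamma x$. For (3), the left side is $\phi_{\alpha x}(\gamma x)=(\alpha\gamma^{-1}\alpha)\cdot x$ by the displayed identity, and the right side unwinds in three steps: $\varphi_{\alpha^{-1}}(\gamma x)=(\alpha^{-1}\gamma)x$, then $\phi_x((\alpha^{-1}\gamma)x)=(\gamma^{-1}\alpha)x$, then $\varphi_\alpha((\gamma^{-1}\alpha)x)=(\alpha\gamma^{-1}\alpha)x$, the same value. For (4), the left side is again $(\alpha\gamma^{-1}\alpha)\cdot x$, while the right side gives $\phi_x(\gamma x)=\gamma^{-1}x$, then $\phi_{\alpha^{-1}x}(\gamma^{-1}x)=(\alpha^{-1}\gamma\alpha^{-1})x$ (the displayed identity with $\beta=\alpha^{-1}$ and the point relabeled), then $\phi_x((\alpha^{-1}\gamma\alpha^{-1})x)=(\alpha\gamma^{-1}\alpha)x$. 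Since the values agree at every point, the maps coincide. Alternatively, (4) can be read off from the auxiliary identity $\phi_x\circ\phi_{\delta x}\circ\phi_x=\phi_{\delta^{-1}x}$ (a one-line consequence of the displayed formula) by taking $\delta=\alpha^{-1}$.

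There is no genuine obstacle: the statement is a bundle of formal identities. The only things to watch are bookkeeping with left versus right multiplication when re-expressing $\gamma x$ relative to a different base point, and the reminder that an equality of maps on $X$ must be checked at \emph{every} point — which transitivity reduces to the single family $\{\gamma x:\gamma\in G\}$. The one mildly delicate ingredient, already handled in the construction of $\phi_x$ preceding the statement, is that $\phi_x$ does not depend on the choice of $\gamma$ with $y=\gamma x$; with that in place, parts (1)--(4) are routine.
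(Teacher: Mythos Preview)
Your proof is correct and follows essentially the same route as the paper: pointwise verification of each identity at an arbitrary $y=\gamma x$ using transitivity and the defining formula $\phi_x(\alpha x)=\alpha^{-1}x$. The only cosmetic difference is that you isolate the general identity $\phi_{\beta x}(\gamma x)=(\beta\gamma^{-1}\beta)x$ up front, whereas the paper derives the needed special cases inline during each computation.
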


\begin{proof}
To prove (1), let $\alpha\in G$ be any element such that $\alpha x=x$. The identity element will work fine. Observe then $$\phi_x(x)=\phi_x(\alpha x)=\alpha^{-1}x=x,$$ since $\alpha^{-1}x=x$ if and only if $\alpha x=x$. To show (2), let $y\in X$ and $\beta\in G$ such that $y=\beta x$. Then $$\phi_x(\phi_x(y))=\phi_x(\phi_x(\beta x))=\phi_x(\beta^{-1}x)=\beta x=y.$$ To show (3), fix $\alpha\in G$, and let $y\in X$ and $\beta\in G$ such that $y=\beta x$. Observe then $$\alpha\phi_x(\alpha^{-1}y)=\alpha\phi_x(\alpha^{-1}\beta x)=\alpha\beta^{-1}\alpha x=\phi_{\alpha x}(\beta\alpha^{-1}\alpha x)=\phi_{\alpha x}(\beta x)=\phi_{\alpha x}(y).$$ To show (4), let $\alpha$, $y$, and $\beta$ be as defined above. Then $$\phi_{x}(\phi_{\alpha^{-1}x}(\phi_x(y)))=\phi_{x}(\phi_{\alpha^{-1}x}(\phi_x(\beta x)))=\phi_{x}(\phi_{\alpha^{-1}x}(\beta^{-1}x))$$ $$=\phi_{x}(\phi_{\alpha^{-1}x}(\beta^{-1}\alpha\alpha^{-1}x))=\phi_x(\alpha^{-1}\beta\alpha^{-1}x)=\alpha\beta^{-1}\alpha x=\phi_{\alpha x}(y),$$ by jumping into the middle of the equality proving (3).
\end{proof}

A very useful result of the homeomorphisms $\phi_x$ is that each one induces a bijective linear isometry $U_x$ on each space $C(X)$ or $L^p(\mu)$, $1\leq p\leq\infty$, via $U_xf=f\circ\phi_x$.

\begin{theorem}
Let $\mathscr{X}$ be any of the spaces $C(X)$ or $L^p(\mu)$, for $1\leq p\leq\infty$, and $x\in X$. The map $U_x:\mathscr{X}\to\mathscr{X}$ given by $U_xf=f\circ\phi_x$ is a bijective linear iosmetry.
\end{theorem}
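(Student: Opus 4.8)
The plan is to dispatch linearity and bijectivity at once, handle the $C(X)$ case directly, and reduce the $L^p(\mu)$ cases to a single measure-theoretic fact about $\phi_x$, which is where the real work lies.

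Linearity of $U_x$ is clear, since composition with the fixed map $\phi_x$ (on the right) is a linear operation on functions. By Theorem~\ref{properties}(2) we have $\phi_x\circ\phi_x=\mathrm{id}_X$, so $U_x(U_xf)=f\circ\phi_x\circ\phi_x=f$ for every $f$; hence $U_x$ is a bijection of $\mathscr X$ onto itself with $U_x^{-1}=U_x$. That $U_x$ genuinely maps $\mathscr X$ into $\mathscr X$ is immediate for $C(X)$ (as $\phi_x$ is a homeomorphism, $f\circ\phi_x$ is continuous when $f$ is) and will follow for $L^p(\mu)$ from the measure-preserving property below. For $\mathscr X=C(X)$ the isometry property is then immediate from the fact that $\phi_x$ is a bijection of $X$:
$$\n{U_xf}=\sup_{y\in X}\abs{f(\phi_x(y))}=\sup_{z\in X}\abs{f(z)}=\n f.$$

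For $\mathscr X=L^p(\mu)$ I would reduce everything to the claim that $\phi_x$ preserves $\mu$, i.e. $(\phi_x)_*\mu=\mu$, equivalently $\mu(\phi_x(E))=\mu(E)$ for every Borel set $E$ (using $\phi_x^{-1}=\phi_x$). Granting this, applying the change-of-variables identity $\int_X g\circ\phi_x\,d\mu=\int_X g\,d\mu$ with $g=\abs f^p$ yields the isometry property for $1\le p<\infty$, while for $p=\infty$ it follows because $\phi_x$ and its inverse preserve $\mu$-null sets, so essential suprema are unchanged (this also supplies the well-definedness of $U_x$ on $L^p$-classes). To prove that $\phi_x$ preserves $\mu$, I would transport the problem to $G/S(x)$ via the homeomorphism $\phi$ of the text: under $\phi$, the measure $\mu$ corresponds to the measure $\bar m$ on $G/S(x)$ given by $\int_{G/S(x)}g\,d\bar m=\int_G g(\alpha S(x))\,dm(\alpha)$, which is precisely the construction of the invariant measure from \cite{joys} (Theorem~6.2) and hence equals the transport of $\mu$ by uniqueness. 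Since $\phi_x=\phi^{-1}\circ\iota\circ\phi$, it suffices to show the inversion map $\iota\colon\alpha S(x)\mapsto\alpha^{-1}S(x)$ preserves $\bar m$; and for $g\in C(G/S(x))$,
$$\int_{G/S(x)}g\circ\iota\,d\bar m=\int_G g(\alpha^{-1}S(x))\,dm(\alpha)=\int_G g(\beta S(x))\,dm(\beta)=\int_{G/S(x)}g\,d\bar m,$$
where the middle step is the inversion-invariance of Haar measure on the compact (hence unimodular) group $G$.

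The main obstacle is exactly this measure-preserving property: $\phi_x$ is not one of the homeomorphisms $\varphi_\alpha$ coming from the group action, so it is not covered by the invariance \eqref{measinv}, and one is forced back to the explicit realization of $\mu$ on $G/S(x)$ together with the unimodularity of $G$. The remaining ingredients — linearity, the involution identity $U_x^2=\mathrm{id}$, and the $C(X)$ computation — are routine.
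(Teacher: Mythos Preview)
Your proof is correct and follows essentially the same route as the paper: both arguments reduce the $L^p$ isometry to the fact that $\phi_x$ preserves $\mu$, and both establish this by unwinding the construction of $\mu$ from Haar measure and invoking inversion-invariance of $m$ on the compact group $G$. The only cosmetic difference is that the paper works directly with the integral formula $\int_X f\,d\mu=\int_G f(\gamma x)\,dm(\gamma)$ from its appendix (Theorem~\ref{G meas}) and then passes to Borel sets via Lusin and dominated convergence, whereas you transport to the quotient $G/S(x)$ and argue with $\bar m$ there; the underlying computation is identical.
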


\begin{proof}
The bijectivity of each $U_x$ is clear, since each is its own inverse by Theorem \ref{properties}(2), and linearity is clear as well. To show that $U_x$ is an isometry on $C(X)$, observe that $$|(U_xf)(y)|=|f(\phi_x(y))|\leq\n{f}\hspace{.25in}\text{and}\hspace{.25in}|f(y)|=|(U_xf)(\phi_x(y))|\leq\n{U_xf}$$ for all $y\in X$ and $f\in C(X)$. In particular, $\n{U_x f}=\n{f}$.

To show that $U_x$ is an isometry on each $L^p(\mu)$, for $1\leq p\leq\infty$, we show that
\begin{equation}\label{Ux inv}
\int_XU_xf\,d\mu=\int_Xf\,d\mu
\end{equation}
holds for all $f\in C(X)$. From the proof of Theorem \ref{G meas} in the appendix, we have $$\int_XU_xf\,d\mu=\int_G(U_xf)(\gamma x)\,dm(\gamma)=\int_Gf(\phi_x(\gamma x))\,dm(\gamma)=\int_Gf(\gamma^{-1}x)\,dm(\gamma),$$ for $f\in C(X)$, where $m$ denotes the Haar measure on $G$. However, the Haar measure on $G$ is invariant under inversion in $G$, and thus $$\int_Gf(\gamma^{-1}x)\,dm(\gamma)=\int_Gf(\gamma x)\,dm(\gamma)=\int_Xf\,d\mu.$$ This verifies property \eqref{Ux inv}. We then apply property \eqref{Ux inv} alongside Lusin's Theorem and the Dominated Convergence Theorem (for reference, Theorem 2.24 and Theorem 1.34, respectively, of \cite{RRC}) to get that $\mu(\phi_x(E))=\mu(E)$ for all Borel sets $E\subset X$. Consequently, $$\int_XU_xf\,d\mu=\int_X f\,d\mu$$ holds for every $f\in L^p(\mu)$, for $1\leq p\leq\infty$. That $U_x$ is an isometry follows directly.
\end{proof}

The following theorem establishes some basic properties of the operators $U_x$, the third of which is the key piece in the results that follow.

\begin{theorem}\label{operator properties}
Suppose $\mathscr{X}$ is any of the spaces $C(X)$ or $L^p(\mu)$, for $1\leq p\leq\infty$, $x\in X$, and $\alpha\in G$. For all $f\in\mathscr{X}$, we have that
\begin{itemize}
    \item[(1)] $\displaystyle (U_{\alpha x}f)\circ\varphi_\alpha=U_{x}(f\circ\varphi_\alpha)$,
    \item[(2)] $\displaystyle U_{\alpha x}f=U_xU_{\alpha^{-1}x}U_xf$, and
    \item[(3)] $(U_xf)\circ\varphi_\alpha=U_xf$ when $\alpha x=x$.
\end{itemize}
\end{theorem}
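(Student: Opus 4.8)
The plan is to obtain all three identities as immediate consequences of the composition relations among the homeomorphisms $\phi_x$ and $\varphi_\alpha$ recorded in Theorem~\ref{properties}, together with the definition $U_xf=f\circ\phi_x$ and associativity of composition. Each relation in Theorem~\ref{properties} is a genuine equality of self-maps of $X$, so composing with an arbitrary $f$ on the left yields a genuine equality of functions; consequently the computation is uniform over $\mathscr{X}=C(X)$ and $\mathscr{X}=L^p(\mu)$ (the maps $\phi_x$ and $\varphi_\alpha$ are $\mu$-preserving homeomorphisms, so composition descends to equivalence classes), and no density argument is required.

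For (1), I would first simplify $\phi_{\alpha x}\circ\varphi_\alpha$. Theorem~\ref{properties}(3) gives $\phi_{\alpha x}=\varphi_\alpha\circ\phi_x\circ\varphi_{\alpha^{-1}}$, and since $\varphi_{\alpha^{-1}}\circ\varphi_\alpha=\varphi_{\alpha^{-1}\alpha}=\varphi_e=\mathrm{id}_X$, this collapses to $\phi_{\alpha x}\circ\varphi_\alpha=\varphi_\alpha\circ\phi_x$. Composing with $f$ on the left then yields
\[
(U_{\alpha x}f)\circ\varphi_\alpha=f\circ\phi_{\alpha x}\circ\varphi_\alpha=f\circ\varphi_\alpha\circ\phi_x=(f\circ\varphi_\alpha)\circ\phi_x=U_x(f\circ\varphi_\alpha).
\]
For (2), I would read Theorem~\ref{properties}(4), $\phi_{\alpha x}=\phi_x\circ\phi_{\alpha^{-1}x}\circ\phi_x$, and unwind the right-hand side from the outermost composition inward:
\[
U_xU_{\alpha^{-1}x}U_xf=\bigl((f\circ\phi_x)\circ\phi_{\alpha^{-1}x}\bigr)\circ\phi_x=f\circ\bigl(\phi_x\circ\phi_{\alpha^{-1}x}\circ\phi_x\bigr)=f\circ\phi_{\alpha x}=U_{\alpha x}f.
\]

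For (3), the key observation is that $\alpha x=x$ is equivalent to $\alpha^{-1}x=x$. Given $y\in X$, transitivity lets us write $y=\beta x$ for some $\beta\in G$, and the explicit formula $\phi_x(\gamma x)=\gamma^{-1}x$ gives
\[
\phi_x\bigl(\varphi_\alpha(y)\bigr)=\phi_x\bigl((\alpha\beta)x\bigr)=(\alpha\beta)^{-1}x=\beta^{-1}\alpha^{-1}x=\beta^{-1}x=\phi_x(\beta x)=\phi_x(y).
\]
Hence $\phi_x\circ\varphi_\alpha=\phi_x$ whenever $\alpha x=x$, and composing with $f$ on the left gives $(U_xf)\circ\varphi_\alpha=f\circ\phi_x\circ\varphi_\alpha=f\circ\phi_x=U_xf$.

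The one step that requires a little care is (3): it should be proved from the map identity $\phi_x\circ\varphi_\alpha=\phi_x$ above rather than extracted from (1). Specializing (1) to the case $\alpha x=x$ yields only $(U_xf)\circ\varphi_\alpha=U_x(f\circ\varphi_\alpha)$, and since $U_x$ is injective this would force $f\circ\varphi_\alpha=f$, which fails in general; so (1) by itself is genuinely weaker than (3), and it is the explicit coset description of $\phi_x$ that does the work. With that point noted, all three parts are routine bookkeeping with Theorem~\ref{properties}.
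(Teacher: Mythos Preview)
Your argument for all three parts is correct and is essentially line-for-line the paper's own proof: parts (1) and (2) are read off from Theorem~\ref{properties}(3) and (4) by left-composing with $f$, and part (3) is proved by the same pointwise coset computation $\phi_x(\alpha\beta x)=\beta^{-1}\alpha^{-1}x=\beta^{-1}x$ that the paper uses.

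One remark on your final paragraph: your methodological point that (3) does not follow formally from (1) is fine, but the parenthetical ``which fails in general'' is misleading in this setting. In fact $f\circ\varphi_\alpha=f$ for every $f\in C(X)$ whenever $\alpha x=x$ is exactly Corollary~\ref{main corollary}, obtained from (3) by replacing $f$ with $U_xf$ and using $U_x\circ U_x=\mathrm{id}$; so the identity you describe as failing is precisely the punchline of the theorem. The correct thing to say is that this identity is not available \emph{a priori}, so (1) alone cannot be used to derive (3) without circularity.
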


\begin{proof}
Equalities (1) and (2) follow from (3) and (4) of Theorem \ref{properties}, respectively: $$(U_{\alpha x}f)\circ\varphi_\alpha=f\circ\phi_{\alpha x}\circ\varphi_\alpha=f\circ\varphi_\alpha\circ\phi_{x}=U_{x}(f\circ\varphi_\alpha).$$ $$U_{\alpha x}f=f\circ\phi_{\alpha x}=f\circ\phi_x\circ\phi_{\alpha^{-1}x}\circ\phi_x=U_xU_{\alpha^{-1}x}U_xf.$$

To show (3), we note that since $\alpha x=x$, we have $\alpha^{-1}x=x$. Let $y\in X$ and $\beta\in G$ such that $y=\beta x$. We then have that $$[(U_xf)\circ\varphi_\alpha](y)=f(\phi_x(\alpha\beta x))=f(\beta^{-1}\alpha^{-1}x)=f(\beta^{-1}x)=f(\phi_x(\beta x))=(U_xf)(y).$$ Since this can be done for any $y\in X$, the proof is complete.
\end{proof}

Replacing $f$ with $U_xf$ in Theorem \ref{operator properties}(3) yields Theorem \ref{stabilizer thm}. A more particular statement of this result is given below as Corollary \ref{main corollary}.

\begin{corollary}\label{main corollary}
If $\alpha\in G$ and $x\in X$ such that $\alpha x=x$, then $f=f\circ\varphi_{\alpha}$ for all $f\in C(X)$.
\end{corollary}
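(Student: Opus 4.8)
The plan is to obtain this statement as an immediate consequence of part (3) of Theorem \ref{operator properties}, combined with the fact that each $U_x$ is an involution. Concretely, I would fix $x\in X$ and $\alpha\in G$ with $\alpha x=x$, and let $g\in C(X)$ be arbitrary. Applying Theorem \ref{operator properties}(3) to $g$ gives the identity $(U_xg)\circ\varphi_\alpha=U_xg$.

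Next I would use that $\phi_x^{-1}=\phi_x$ (Theorem \ref{properties}(2)), so that $U_xU_xg=g\circ\phi_x\circ\phi_x=g$ for every $g\in C(X)$; in particular $U_x$ maps $C(X)$ bijectively onto itself. Now, given an arbitrary $f\in C(X)$, set $g=U_xf$. Then $U_xg=U_xU_xf=f$, and substituting into the identity from the previous paragraph yields
$f\circ\varphi_\alpha=(U_xg)\circ\varphi_\alpha=U_xg=f$, which is exactly the assertion. (This is the same manipulation alluded to in the text as ``replacing $f$ with $U_xf$ in Theorem \ref{operator properties}(3)''.)

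Since each step is a direct substitution, I do not expect a real obstacle here; the only point that needs a word of justification is that the substitution $g=U_xf$ is legitimate for \emph{every} $f\in C(X)$, and this is guaranteed precisely because $U_x$ is its own inverse. Finally, I note that the identical argument goes through verbatim with $C(X)$ replaced by any $L^p(\mu)$, $1\le p\le\infty$, since both Theorem \ref{operator properties}(3) and the involution property $U_x^2=\mathrm{id}$ are available there as well.
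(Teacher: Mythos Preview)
Your argument is correct and is exactly the paper's own proof: the text derives the corollary by ``replacing $f$ with $U_xf$ in Theorem~\ref{operator properties}(3),'' using $U_x^2=\mathrm{id}$ from Theorem~\ref{properties}(2), which is precisely the substitution you spell out.
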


Theorem \ref{actions}, relating the actions of $G$ on $X$ and $G$ on $C(X)$, immediately follows.

\begin{proof}[Proof of Theorem \ref{actions}]
Suppose that the action of $G$ on $C(X)$ is faithful, and let $\alpha\in G$ and $x\in X$ such that $\alpha x=x$. To show that the action of $G$ on $X$ is free, we must prove that $\alpha$ is the identity element of $G$.

Since $\alpha$ stabilizes some element of $X$, we must have that $f=f\circ\varphi_\alpha$ for all $f\in C(X)$ by Corollary \ref{main corollary}. Since the action of $G$ on $C(X)$ is faithful, we get that $\alpha$ must be the identity element of $G$, as desired.
\end{proof}

%%%%%%%%%%%%%%%%%%%%%%%%%%%%%%%%%%%%%%%%%%
\section{Discussing a Decomposition Problem}\label{Decomp}
%%%%%%%%%%%%%%%%%%%%%%%%%%%%%%%%%%%%%%%%%%

In this section, we revisit Conjecture \ref{conjecture} as studied in \cite{hokampconti} and discuss the additional condition imposed on $X$ that allowed for the main result of that paper. To begin, we show how the Peter-Weyl Theorem from \cite{peterweyl} is used to establish the existence of a %unique
collection $\mathscr{G}$ of closed $G$-minimal spaces of continuous functions over $X$:

\begin{theorem}\label{collection}
Let $G$ be a compact group acting continuously and transitively on a compact Hausdorff space $X$. Then there exists a %unique
collection $\mathscr{G}$ of closed, pairwise orthogonal, $G$-minimal subspaces of $C(X)$ whose direct sum is $L^2(\mu)$.
\end{theorem}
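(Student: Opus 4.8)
The plan is to obtain $\mathscr{G}$ by refining the isotypic decomposition of the unitary representation of $G$ on $L^2(\mu)$ furnished by the Peter--Weyl Theorem. First I would note that, since $\mu$ is $G$-invariant, each map $L_\alpha\colon L^2(\mu)\to L^2(\mu)$ defined by $L_\alpha f=f\circ\varphi_\alpha^{-1}$ is a surjective linear isometry, hence unitary, and $\alpha\mapsto L_\alpha$ is a group homomorphism into the unitary group of $L^2(\mu)$. To see that this representation is strongly continuous, recall from Section~\ref{prelim} that $\alpha\mapsto f\circ\varphi_\alpha^{-1}$ is continuous from $G$ into $(C(X),\n{\cdot})$ for each $f\in C(X)$; since $\mu$ is a probability measure the inclusion $C(X)\hookrightarrow L^2(\mu)$ is norm-decreasing, so $\alpha\mapsto L_\alpha f$ is continuous into $L^2(\mu)$ for every $f\in C(X)$, and then the uniform bound $\|L_\alpha f\|_{L^2(\mu)}=\|f\|_{L^2(\mu)}$ together with the density of $C(X)$ in $L^2(\mu)$ upgrades this, by a routine $\varepsilon/3$ estimate, to strong continuity on all of $L^2(\mu)$.

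Next I would invoke the Peter--Weyl Theorem in the form: $L^2(\mu)$ is the orthogonal Hilbert-space direct sum $\bigoplus_{\pi\in\widehat G}L^2(\mu)_\pi$ of its isotypic components, where $L^2(\mu)_\pi$ is the range of the projection $E_\pi=d_\pi\int_G\overline{\chi_\pi(\alpha)}\,L_\alpha\,dm(\alpha)$ and is a closed $G$-invariant subspace isomorphic, as a representation, to a direct sum of copies of the irreducible $\pi$. Two properties of these components must be checked. \emph{(i)} Each $L^2(\mu)_\pi$ is finite-dimensional: the map $f\mapsto\big(\gamma\mapsto f(\gamma x)\big)$ is an isometric embedding of $L^2(\mu)$ onto a closed $G$-invariant subspace of $L^2(G)$ — the isometry is precisely the identity $\int_Xf\,d\mu=\int_Gf(\gamma x)\,dm(\gamma)$ used in the proof of Theorem~\ref{G meas}, and the embedding intertwines $L_\alpha$ with left translation — so $L^2(\mu)_\pi$ is carried into the $\pi$-isotypic component of the regular representation of $G$, which has dimension $d_\pi^2$; hence $\dim L^2(\mu)_\pi\le d_\pi^2<\infty$. \emph{(ii)} Each $L^2(\mu)_\pi$ consists of continuous functions: for $f\in C(X)$ the element $E_\pi f$ is a Bochner integral in the Banach space $C(X)$, because by the first paragraph $\alpha\mapsto L_\alpha f$ is norm-continuous into $C(X)$, so $E_\pi f\in C(X)$; thus $E_\pi(C(X))\subseteq C(X)\cap L^2(\mu)_\pi$, and since $C(X)$ is dense in $L^2(\mu)$ and $E_\pi$ is bounded, $E_\pi(C(X))$ is a dense subspace of the finite-dimensional space $L^2(\mu)_\pi$, hence equals it; therefore $L^2(\mu)_\pi=E_\pi(C(X))\subseteq C(X)$.

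With these in hand the collection is built directly. Each nonzero $L^2(\mu)_\pi$ is a finite-dimensional $G$-invariant subspace of $C(X)$ that decomposes as a representation into copies of $\pi$; choose for each such $\pi$ an orthogonal decomposition $L^2(\mu)_\pi=W_{\pi,1}\oplus\dots\oplus W_{\pi,m_\pi}$ into irreducible $G$-invariant subspaces. Every $W_{\pi,j}$ is closed, being finite-dimensional, and is $G$-minimal, since it is irreducible and, being finite-dimensional, every subspace of it is closed. Set $\mathscr{G}=\{\,W_{\pi,j}:\pi\in\widehat G,\ L^2(\mu)_\pi\neq\{0\},\ 1\le j\le m_\pi\,\}$. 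Members with the same $\pi$ are mutually orthogonal by construction, and members with distinct $\pi$ are orthogonal because distinct isotypic components of a unitary representation are orthogonal; finally the closed linear span of $\bigcup\mathscr{G}$ is $\bigoplus_{\pi\in\widehat G}L^2(\mu)_\pi=L^2(\mu)$.

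The step I expect to be the main obstacle is the second paragraph — reconciling the abstract Peter--Weyl decomposition of the representation with the concrete subspace $C(X)\subseteq L^2(\mu)$, i.e.\ verifying that the isotypic components are finite-dimensional and are carried by continuous functions. Once that is settled, the passage to $\mathscr{G}$ is essentially bookkeeping. (One can instead bypass the finite-dimensionality of the $L^2(\mu)_\pi$ by combining the density in $C(X)$ of the $G$-finite continuous functions — itself a consequence of Peter--Weyl — with a Zorn's-lemma maximality argument; but then keeping track of $G$-minimal subspaces of $C(X)$ under the $L^2$-orthogonal projections becomes the awkward point, so I favor the isotypic-component argument sketched above.)
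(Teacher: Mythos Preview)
Your proof is correct and follows the same overall strategy as the paper---apply Peter--Weyl to the unitary representation on $L^2(\mu)$, then argue that the resulting irreducible pieces lie in $C(X)$---but the technical execution differs in two places. First, the paper invokes the form of Peter--Weyl (Knapp, Theorem~1.12(e)) that directly decomposes any unitary representation of a compact group into an orthogonal sum of finite-dimensional irreducibles, so there is no separate isotypic step and no need to bound $\dim L^2(\mu)_\pi$ via the embedding into $L^2(G)$. Second, to show the irreducible summands consist of continuous functions, the paper appeals to Lemma~\ref{lemma} (that $Y\cap C(X)$ is dense in any closed $G$-invariant $Y\subset L^2(\mu)$) together with finite-dimensionality, whereas you prove this directly by observing that $E_\pi f$ is a $C(X)$-valued Bochner integral when $f\in C(X)$. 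Your route is more self-contained and makes the role of the embedding $L^2(\mu)\hookrightarrow L^2(G)$ explicit; the paper's is shorter because it outsources both points to cited results.
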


The statement of the key part of the Peter-Weyl Theorem used in the proof of Theorem~\ref{collection} (given below as Theorem \ref{part2}) is taken from \cite{knapp}. One can also find this result as Theorem~5.2 from \cite{folland}.

\begin{theorem}[\cite{knapp} Theorem 1.12(e)]\label{part2}
Let $\Phi$ be a unitary representation of $G$ on a Hilbert space $V$. Then $V$ is the orthogonal direct sum of finite-dimensional irreducible invariant subspaces.
\end{theorem}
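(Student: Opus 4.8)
The plan is to prove Theorem \ref{part2} by the classical compact-operator argument, reducing everything to a single assertion: \emph{every nonzero closed invariant subspace of $V$ contains a finite-dimensional irreducible invariant subspace}. Granting this, a Zorn's Lemma exhaustion produces a maximal orthogonal family of such subspaces spanning $V$. So the proof splits into a reduction step (the construction of a finite-dimensional invariant subspace) and an assembly step.

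For the reduction step, let $W \subseteq V$ be a nonzero closed invariant subspace, fix a unit vector $v_0 \in W$, and let $T_0$ be the rank-one operator $T_0 u = \inpd{u}{v_0} v_0$. I would average $T_0$ over the group against Haar measure $m$, defining
\begin{equation*}
T = \int_G \Phi(g)\, T_0\, \Phi(g)^*\, dm(g),
\end{equation*}
the integral taken in the operator-norm topology. Since $\Phi(g)\, T_0\, \Phi(g)^* u = \inpd{u}{\Phi(g)v_0}\,\Phi(g)v_0$ and $g \mapsto \Phi(g)v_0$ is norm-continuous, the integrand is a norm-continuous family of rank-one operators, so $T$ is a norm-limit of finite-rank operators and hence compact. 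It is self-adjoint and positive, and nonzero because $\inpd{T v_0}{v_0} = \int_G \abs{\inpd{v_0}{\Phi(g)v_0}}^2\, dm(g) > 0$, the integrand being continuous and equal to $1$ at $g=e$. Left-invariance of $m$ gives $\Phi(h)\,T\,\Phi(h)^* = T$, so $T$ commutes with every $\Phi(h)$, and $T(W)\subseteq W$ by invariance. The spectral theorem for compact self-adjoint operators then yields a nonzero eigenvalue $\lambda$ with finite-dimensional eigenspace $E_\lambda = \ker(T-\lambda I)\cap W$, which is $\Phi$-invariant because $T$ commutes with the representation. Finally, among the nonzero invariant subspaces of the finite-dimensional $E_\lambda$, one of least dimension is necessarily irreducible, completing the reduction.

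For the assembly step, I would consider the collection of all families of pairwise orthogonal finite-dimensional irreducible invariant subspaces of $V$, ordered by inclusion. The union of a chain is again such a family, so Zorn's Lemma furnishes a maximal family $\mathscr{F}$. Let $W$ be the closure of the orthogonal algebraic direct sum of the members of $\mathscr{F}$; then $W$ is closed and invariant, and since each $\Phi(g)$ is unitary, $W^\perp$ is invariant as well. If $W^\perp \neq \{0\}$, the reduction step produces a finite-dimensional irreducible invariant subspace inside $W^\perp$, orthogonal to every member of $\mathscr{F}$ and hence adjoinable to it, contradicting maximality. Therefore $W^\perp = \{0\}$ and $V$ is the orthogonal direct sum of the members of $\mathscr{F}$, as claimed.

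The main obstacle is the operator-theoretic construction in the reduction step, namely verifying that the averaged operator $T$ is simultaneously compact, self-adjoint, nonzero, and commuting with $\Phi$. The delicate point is compactness: a unitary representation is in general only strongly continuous, so the conjugation $g \mapsto \Phi(g)\,T_0\,\Phi(g)^*$ need not be norm-continuous for a general $T_0$. The argument succeeds precisely because $T_0$ is rank one and $g\mapsto \Phi(g)v_0$ is norm-continuous, which renders the integrand norm-continuous and lets $T$ be approximated in operator norm by Riemann sums of finite-rank operators.
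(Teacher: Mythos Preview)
Your argument is correct and is essentially the classical proof of this part of the Peter--Weyl theorem: average a rank-one projection over $G$ to produce a compact self-adjoint intertwiner, extract a finite-dimensional eigenspace, descend to an irreducible piece, and exhaust by Zorn. The one delicate point you rightly flag---that compactness of $T$ hinges on norm-continuity of $g\mapsto\Phi(g)T_0\Phi(g)^*$, which holds precisely because $T_0$ is rank one and $g\mapsto\Phi(g)v_0$ is strongly (hence norm-) continuous---is handled correctly.

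That said, there is nothing in the paper to compare your proof against. Theorem~\ref{part2} is not proved in the paper; it is quoted verbatim from Knapp (Theorem~1.12(e)) as an external ingredient and then invoked in the proof of Theorem~\ref{collection}. The paper's only contribution at this point is to check that $\alpha\mapsto(f\mapsto f\circ\varphi_\alpha)$ is a unitary representation of $G$ on $L^2(\mu)$, apply the cited Peter--Weyl statement as a black box, and then use Lemma~\ref{lemma} together with finite-dimensionality to see that the resulting irreducible pieces consist of continuous functions. So your write-up supplies a proof where the paper deliberately defers to the literature.
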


Further treatment of unitary representations can be found in \cite{folland}. Note that a subspace $M\subset V$ is \textit{invariant} for $\Phi$ if $\Phi(\alpha)M\subset M$ for all $\alpha\in G$. Further, a subspace of $V$ is \textit{irreducible} if it contains no non-trivial invariant subspaces.

Lemma 5.3 of \cite{hokampconti}, given below as Lemma \ref{lemma}, is also used in the proof of Theorem \ref{collection}.

\begin{lemma}[\cite{hokampconti} Lemma 5.3]\label{lemma}
Let $\mathscr{X}$ be any of the spaces $C(X)$ or $L^p(\mu)$, for $1\leq p<\infty$. If $Y$ is a closed $G$-invariant space in $\mathscr{X}$, then $Y\cap C(X)$ is dense in $Y$.
\end{lemma}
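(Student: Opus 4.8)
The plan is to reduce at once to the case $\mathscr{X}=L^p(\mu)$ with $1\le p<\infty$, since for $\mathscr{X}=C(X)$ the statement is vacuous ($Y\cap C(X)=Y$). The idea is to smooth an arbitrary $f\in Y$ by averaging its $G$-translates against an approximate identity on $G$: because $Y$ is a closed $G$-invariant subspace it is stable under such averages, so the approximants stay in $Y$, while averaging against a continuous weight manufactures a continuous function. Fix a base point $x_0\in X$ and let $\pi\colon G\to X$ be $\pi(\gamma)=\gamma x_0$. By the theorem of Weil recalled in Section~\ref{Homies}, $\pi$ is a continuous open surjection and $X$ carries the quotient topology it determines (upon identifying $X$ with $G/S(x_0)$); moreover, as in the proof of Theorem~\ref{G meas}, $\pi$ pushes the Haar measure $m$ forward to $\mu$, so $g\mapsto g\circ\pi$ is an isometric embedding of $L^p(\mu)$ into $L^p(G,m)$.

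For each neighborhood $V$ of the identity $e\in G$, Urysohn's lemma together with the positivity of $m$ on nonempty open sets furnishes $\psi=\psi_V\in C(G)$ with $\psi\ge 0$, $\operatorname{supp}\psi\subseteq V$, and $\int_G\psi\,dm=1$. Since $G$ is compact and $\gamma\mapsto f\circ\varphi_{\gamma^{-1}}$ is continuous as a map $G\to L^p(\mu)$ (see the last paragraph), the average
$$f_\psi:=\int_G\psi(\gamma)\,(f\circ\varphi_{\gamma^{-1}})\,dm(\gamma)$$
exists in $L^p(\mu)$ as a norm-limit of Riemann sums $\sum_i m(E_i)\,\psi(\gamma_i)\,(f\circ\varphi_{\gamma_i^{-1}})$. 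Each such sum lies in $Y$, being a linear combination of the elements $f\circ\varphi_{\gamma_i^{-1}}\in Y$ (by $G$-invariance of $Y$), and since $Y$ is closed we get $f_\psi\in Y$. Using $\int_G\psi\,dm=1$ and the invariance of $\mu$,
$$\Big\|f_\psi-f\Big\|_p=\Big\|\int_G\psi(\gamma)\big(f\circ\varphi_{\gamma^{-1}}-f\big)\,dm(\gamma)\Big\|_p\le\sup_{\gamma\in V}\big\|f\circ\varphi_{\gamma^{-1}}-f\big\|_p,$$
and the right-hand side tends to $0$ as $V$ shrinks to $\{e\}$, by continuity of $\gamma\mapsto f\circ\varphi_{\gamma^{-1}}$ at $e$. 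Thus it remains only to show that $f_\psi$ admits a continuous representative on $X$.

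For this, set $h(x)=\int_G\psi(\gamma)\,f(\gamma^{-1}x)\,dm(\gamma)$; this is well defined for every $x\in X$, since $\gamma\mapsto\gamma^{-1}x$ pushes $m$ to $\mu$ (Haar measure being inversion-invariant), so $\gamma\mapsto f(\gamma^{-1}x)\in L^1(G,m)$. A routine Fubini argument, pairing with the dual space, identifies $h$ with $f_\psi$ as elements of $L^p(\mu)$. Writing $g=f\circ\pi\in L^p(G,m)$ and substituting $\delta=\gamma^{-1}\eta$, the inversion- and translation-invariance of $m$ give
$$h(\pi(\eta))=\int_G\psi(\gamma)\,g(\gamma^{-1}\eta)\,dm(\gamma)=\int_G\psi(\eta\delta^{-1})\,g(\delta)\,dm(\delta)=(\psi*g)(\eta).$$
Since $\psi\in C(G)$ and $g\in L^1(G,m)$, the convolution $\psi*g$ is uniformly continuous on $G$ (the standard fact $C(G)*L^1(G)\subseteq C(G)$, proved via uniform continuity of $\psi$). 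Hence $h\circ\pi\in C(G)$, and because $X$ carries the quotient topology determined by $\pi$, this forces $h\in C(X)$. Therefore $f_\psi=h\in Y\cap C(X)$, and since $\|f_\psi-f\|_p\to 0$, the subspace $Y\cap C(X)$ is dense in $Y$.

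The one genuinely substantive step is showing $f_\psi\in C(X)$: the point is that after lifting to $G$ the average becomes an honest convolution of a continuous function with an $L^p$ function, which is automatically continuous, and transitivity of the action — through the surjectivity and openness of $\pi$ — is exactly what allows this continuity to descend back to $X$. The only auxiliary input needed is the strong continuity of the translation action on $L^p(\mu)$, i.e. that $\gamma\mapsto f\circ\varphi_{\gamma^{-1}}$ is continuous from $G$ into $L^p(\mu)$ for each $f\in L^p(\mu)$; this is inherited from the corresponding statement on $C(X)$ (Lemma~5.1 of \cite{hokampconti}) by the density of $C(X)$ in $L^p(\mu)$ and the fact that each translation is an isometry.
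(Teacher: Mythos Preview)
Your proof is correct and follows essentially the same approach as the paper's: mollify $f$ by averaging its $G$-translates against a continuous approximate identity $\psi$ supported near $e$, use the closedness and $G$-invariance of $Y$ to keep the average in $Y$, and use strong continuity of translation on $\mathscr{X}$ for the approximation. The only cosmetic difference is that the paper shows continuity of the mollified function by the one-line change of variables $g(\beta x_1)=\int_G\psi(\alpha\beta^{-1})f(\alpha x_1)\,dm(\alpha)$, whereas you make the same idea explicit by lifting to $G$, recognizing $h\circ\pi=\psi*g$, and descending via the quotient topology.
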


\begin{proof}[Proof of Theorem \ref{collection}]
One easily checks that the map $\Phi$ given by $$\Phi:G\to \mathscr{U}(L^2(\mu))\hspace{.125in}\text{such that}\hspace{.125in}\Phi(\alpha)f=f\circ\varphi_\alpha\hspace{.125in}\text{for all }f\in L^2(\mu),$$ is a unitary representation of $G$, where $\mathscr{U}(L^2(\mu))$ denotes the group of unitary operators on $L^2(\mu)$. Thus, by the part of the Peter-Weyl Theorem given above as Theorem~\ref{part2}, $L^2(\mu)$ decomposes into the orthogonal direct sum of finite-dimensional irreducible invariant subspaces. To translate into our terminology, an invariant subspace of the representation is equivalent to the subspace being $G$-invariant, and a subspace being irreducible is equivalent to it being $G$-minimal.

Let $\mathscr{G}$ denote this collection of pairwise orthogonal, $G$-minimal spaces. Since each element of $\mathscr{G}$ is finite-dimensional, each is closed. Finally, Lemma \ref{lemma} yields that each element of $\mathscr{G}$ is in fact a space of continuous functions. Note the role that the finite-dimensionality of the elements of $\mathscr{G}$ plays here. %UNIQUENESS??
\end{proof}

%\begin{remark}
%Explicitly, each $f\in L^2(\mu)$ has a unique expansion $f=\sum f_i$, with $f_i\in H_i$, that converges unconditionally to $f$ in the $L^2$-norm.
%\end{remark}

In \cite{hokampconti}, the collection $\mathscr{G}$ was affixed with the additional property that
\begin{equation}\label{prop from hokampconti}
\dim (H\cap H(x))=1
\end{equation}
for each $x\in X$ and each $H\in\mathscr{G}$, where $$H(x)=\{f\in C(X):f=f\circ\varphi_\alpha,\text{ for all }\alpha\in G\text{ such that }\alpha x=x\}.$$ That is, $H(x)$ is the space of continuous functions on $X$ that are stabilized in $C(X)$ by every stabilizer of $x$ from $G$. However, Corollary \ref{main corollary} immediately yields the following:

\begin{theorem}
For each $x\in X$, we have that $H(x)=C(X)$.
\end{theorem}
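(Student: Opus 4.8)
The plan is to show the two inclusions $H(x) \subseteq C(X)$ and $C(X) \subseteq H(x)$, of which only the latter requires any work. The containment $H(x) \subseteq C(X)$ is immediate from the definition of $H(x)$ as a subset of $C(X)$. For the reverse containment, fix an arbitrary $f \in C(X)$; I must verify that $f$ satisfies the defining condition of $H(x)$, namely that $f = f \circ \varphi_\alpha$ for every $\alpha \in G$ with $\alpha x = x$.

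So first I would take any $\alpha \in G$ such that $\alpha x = x$. This is exactly the hypothesis of Corollary \ref{main corollary}: $\alpha$ stabilizes the point $x \in X$. Applying that corollary directly gives $f = f \circ \varphi_\alpha$ for all $f \in C(X)$. Since $\alpha$ was an arbitrary stabilizer of $x$, every $f \in C(X)$ meets the condition defining $H(x)$, whence $C(X) \subseteq H(x)$. Combining the two inclusions yields $H(x) = C(X)$, and since $x \in X$ was arbitrary, this holds for every $x \in X$.

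There is essentially no obstacle here: the theorem is a near-immediate corollary of Corollary \ref{main corollary}, which in turn is Theorem \ref{operator properties}(3) with $f$ replaced by $U_x f$ (using that $U_x$ is a bijection on $C(X)$, so $U_x f$ ranges over all of $C(X)$ as $f$ does). The only thing worth stating carefully is that the quantifier structure matches: $H(x)$ is defined by ``$f = f \circ \varphi_\alpha$ for all $\alpha$ fixing $x$,'' and Corollary \ref{main corollary} supplies exactly this for each such $\alpha$ and each $f \in C(X)$. If I wanted to be maximally self-contained I would instead cite Theorem \ref{stabilizer thm} (``$\alpha$ stabilizes every continuous function on $X$''), which is phrased in precisely the language needed, but either route is a one-line deduction.
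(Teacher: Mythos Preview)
Your proof is correct and matches the paper's approach: the paper simply states that ``Corollary~\ref{main corollary} immediately yields'' this theorem, and your double-inclusion argument is precisely the unpacking of that immediate implication.
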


Thus, property \eqref{prop from hokampconti} amounts to requiring each space $H\in\mathscr{G}$ to have dimension 1: $$1=\dim (H\cap H(x))=\dim (H\cap C(X))=\dim H.$$ This property occurs in some important cases, such as when $G$ is abelian (\cite{folland}, Corollary~3.6), but is not the case in general, as we see in the work of Nagel and Rudin in \cite{NR}. Further, the work of Nagel and Rudin shows that this condition is not strictly necessary to achieve the decomposition of closed $G$-invariant spaces into $G$-minimal spaces. Currently, Conjecture~\ref{conjecture} in its full generality remains unproven.

%%%%%%%%%%%%%%%%%%%%%%%%%%%%%%%%%%%%%%%%%%
\section{Convex Homogeneous Spaces of a Group Action}\label{Convex Spaces}
%%%%%%%%%%%%%%%%%%%%%%%%%%%%%%%%%%%%%%%%%%

In this section, we prove Theorem \ref{main result}, the proof of which uses Corollary \ref{main corollary} alongside the Schauder-Tychonoff Theorem (for reference, Theorem 5.28 of \cite{RFA}) and Urysohn's Lemma (for reference, Theorem 33.1 of \cite{MUNK}). Note that Urysohn's Lemma applies because $X$, as a compact Hausdorff space, is a normal space (Theorem 32.3 of \cite{MUNK}).

\begin{proof}[Proof of Theorem \ref{main result}]
To begin, let $G$ be the compact group which acts continuously and transitively on $X$ to make it a homogeneous space. We note that since $X$ is a convex, compact subset of a locally convex topological vector space, every homeomorphism of $X$ has a fixed point, by the Schauder-Tychonoff Theorem. In particular, for every $\alpha\in G$, the map $\varphi_\alpha$ has a fixed point; that is, for each $\alpha\in G$ there exists $x\in X$ such that $\alpha x=x$. Applying Corollary \ref{main corollary}, for all $f\in C(X)$ we have that $f=f\circ\varphi_\alpha$.

However, due to the transitivity of the group action, we get that each function $f\in C(X)$ must be constant: Let $x,y\in X$ and $\alpha\in G$ such that $y=\alpha x$. Then $$f(y)=f(\alpha x)=(f\circ\varphi_\alpha)(x)=f(x).$$ This can be done for each pair of points in $X$, so $f$ must be constant.

Finally, if $X$ were to contain more than one point, for each pair of distinct points in $X$ there would exist a continuous function separating them, according to Urysohn's Lemma. However, as we just established, all continuous functions on $X$ are constant, meaning $X$ cannot contain any pairs of distinct points. Thus, $X$ must be a singleton set.
\end{proof}

\section{Appendix: Existence of a \texorpdfstring{$G$}{G}-Invariant Measure on \texorpdfstring{$X$}{X}}\label{appendix}
%%%%%%%%%%%%%%%%%%%%%%%%%%%%%%%%%%%%%%%%%%

In this section we present a non-constructive proof of the following result of Andr\'e Weil from \cite{weil1940}. Recall $m$ denotes the Haar measure of $G$.

\begin{theorem}\label{G meas}
There exists a unique regular Borel probability measure $\mu$ on $X$ that is invariant under $G$ in the sense that
\begin{equation}\label{inv}
    \int_X f\,d\mu=\int_X f\circ\varphi_\alpha\,d\mu,
\end{equation}
for every $f\in C(X)$ and every $\alpha\in G$.
\end{theorem}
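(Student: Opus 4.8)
The plan is to construct $\mu$ directly from the Haar measure of $G$, using transitivity to push $m$ forward onto $X$, and then to invoke the Riesz Representation Theorem for both existence and uniqueness. First I would normalize $m$ so that $m(G)=1$, fix a base point $x_0\in X$, and define a functional $\Lambda\colon C(X)\to\C$ by $\Lambda f=\int_G f(\gamma x_0)\,dm(\gamma)$. The integrand $\gamma\mapsto f(\gamma x_0)$ is continuous on $G$ because the action $\Phi$ is continuous, so $\Lambda$ is well defined; it is clearly linear, positive (if $f\geq 0$ then $\Lambda f\geq 0$), and $\Lambda 1=m(G)=1$. By the Riesz Representation Theorem there is then a unique regular Borel measure $\mu$ on $X$ with $\int_X f\,d\mu=\Lambda f$ for every $f\in C(X)$, and $\mu(X)=\Lambda 1=1$, so $\mu$ is a probability measure.

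Next I would check invariance. Fix $\alpha\in G$ and $f\in C(X)$. Since $(f\circ\varphi_\alpha)(\gamma x_0)=f(\alpha\gamma x_0)$, applying the left-invariance of $m$ to the function $g(\gamma)=f(\gamma x_0)$ gives
\[
\int_X f\circ\varphi_\alpha\,d\mu=\int_G f(\alpha\gamma x_0)\,dm(\gamma)=\int_G f(\gamma x_0)\,dm(\gamma)=\int_X f\,d\mu,
\]
which is exactly \eqref{inv}. (This construction is the one the body of the paper refers back to when computing with the operators $U_x$.)

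For uniqueness, suppose $\nu$ is any regular Borel probability measure satisfying \eqref{inv}. The key observation is that the $G$-average $A f:=\int_G f(\gamma x)\,dm(\gamma)$ does not depend on $x$: writing $x=\beta x_0$ and using the right-invariance of $m$ (valid since a compact group is unimodular) gives $\int_G f(\gamma\beta x_0)\,dm(\gamma)=\int_G f(\gamma x_0)\,dm(\gamma)=\Lambda f$, and transitivity makes $\beta$ range over enough of $G$ to reach every $x\in X$. I would then integrate the identity $\int_X f\,d\nu=\int_X f\circ\varphi_\alpha\,d\nu$ over $\alpha\in G$ against $m$ and interchange the order of integration (Fubini applies since $(\alpha,x)\mapsto f(\alpha x)$ is continuous on the compact space $G\times X$ and $m\times\nu$ is finite), obtaining $\int_X f\,d\nu=\int_X\big(\int_G f(\alpha x)\,dm(\alpha)\big)\,d\nu(x)=\int_X \Lambda f\,d\nu(x)=\Lambda f=\int_X f\,d\mu$. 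Since $\nu$ and $\mu$ are regular Borel measures agreeing on all of $C(X)$, the uniqueness clause of the Riesz Representation Theorem forces $\nu=\mu$.

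The routine parts are the continuity and measurability checks and the Fubini justification. The only genuinely substantive point is that the $G$-average of a continuous function is constant on $X$ — this is where transitivity, together with the unimodularity of the compact group $G$, does the real work, both in showing the construction is independent of the chosen base point and in driving the uniqueness argument. I expect that to be the step requiring the most care to state cleanly.
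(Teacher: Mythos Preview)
Your argument is correct and, for existence and invariance, is essentially identical to the paper's: define $\Lambda f=\int_G f(\gamma x_0)\,dm(\gamma)$, apply Riesz, and read off invariance from left-invariance of $m$. The one place you organize things differently is that the paper front-loads the base-point independence computation as a ``well-definedness'' check on $\Lambda$, whereas you place that same computation inside the uniqueness argument; either way it is the same use of right-invariance of $m$ on a compact group.

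Where you actually go beyond the paper is uniqueness. The paper's appendix invokes only the Riesz uniqueness clause, which says the representing measure for the fixed functional $\Lambda$ is unique; it does not argue that every $G$-invariant regular Borel probability measure must agree with $\mu$. Your Fubini averaging argument---integrate the invariance identity for $\nu$ over $G$, use that the inner $G$-average is the constant $\Lambda f$, and conclude $\int f\,d\nu=\Lambda f$---supplies exactly the missing step, so your proof is in fact more complete on this point.
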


\begin{proof}

We define the map $\Lambda$ on $C(X)$ by $$\Lambda f=\int_G f(\gamma x)\,dm(\gamma),$$ for some $x\in X$. The transitivity of the action and the invariance of the Haar measure yield that $\Lambda$ is well-defined: If $y\in X$, then there exists $\beta\in G$ such that $\beta x=y$. Thus, $$\int_G f(\gamma y)\,dm(\gamma)=\int_G f(\gamma \beta x)\,dm(\gamma)=\int_G f(\gamma x)\,dm(\gamma).$$

Clearly, $\Lambda$ is a linear functional on $C(X)$. Further, it is bounded: $$|\Lambda f|\leq\int_G |f(\gamma x)|\,dm(\gamma)\leq\n{f}.$$

Thus, by the Riesz Representation Theorem (Theorem 6.19 \cite{RRC}), there exists a unique regular Borel measure $\mu$ on $X$ such that $$\int_X f\,d\mu=\Lambda f$$ for all $f\in C(X)$. Since $m$ is a probability measure on $G$, $\mu$ is a probability measure on $X$.

The invariance property \eqref{inv} follows from the invariance of the Haar measure: $$\Lambda(f\circ\varphi_\alpha)=\int_G f(\alpha\gamma x)\,dm(\gamma)=\int_G f(\gamma x)\,dm(\gamma)=\Lambda f.$$

\end{proof}

\section{Data Availability Statement}
%%%%%%%%%%%%%%%%%%%%%%%%%%%%%%%%%%%%%%%%%%

Data sharing not applicable to this article as no datasets were generated or analysed during the current study.

%%%%%%%%%%%%%%%%%%%%%%%%%%%%%%%%%%%%%%%%%
\bibliographystyle{unsrt}
\bibliography{bibliography}

\begin{thebibliography}{10}

\bibitem{NR}
Alexander Nagel and Walter Rudin.
\newblock Moebius-invariant function spaces on balls and spheres.
\newblock {\em Duke Math. J.}, 43(4):841--865, 1976.

\bibitem{RFT}
Walter Rudin.
\newblock {\em Function theory in the unit ball of {$\Bbb C^n$}}.
\newblock Classics in Mathematics. Springer-Verlag, Berlin, 2008.
\newblock Reprint of the 1980 edition.

\bibitem{peterweyl}
F.~Peter and H.~Weyl.
\newblock {Die Vollst{\ifmmode\ddot{a}\else\"{a}\fi}ndigkeit der primitiven
  Darstellungen einer geschlossenen kontinuierlichen Gruppe}.
\newblock {\em Math. Ann.}, 97(1):737--755, December 1927.

\bibitem{hokampconti}
Samuel~A. Hokamp.
\newblock Spaces of continuous and measurable functions invariant under a group
  action.
\newblock {\em https://arxiv.org/abs/2110.12060}, October 2021.

\bibitem{weil1940}
Andr\'e. Weil.
\newblock {L'int\'egration dans les groupes topologiques et ses applications}.
\newblock {\em Actual. Sci. Ind.}, no. 869, 1940.

\bibitem{joys}
Joe Diestel and Angela Spalsbury.
\newblock {\em {The Joys of Haar Measure}}.
\newblock American Mathematical Society, April 2014.

\bibitem{RRC}
Walter Rudin.
\newblock {\em Real and complex analysis}.
\newblock McGraw-Hill, 1987.

\bibitem{knapp}
A.W. Knapp.
\newblock {\em Representation Theory of Semisimple Groups: An Overview Based on
  Examples}.
\newblock Princeton landmarks in mathematics and physics. Princeton University
  Press, 2001.

\bibitem{folland}
G.B. Folland.
\newblock {\em A Course in Abstract Harmonic Analysis}.
\newblock Studies in Advanced Mathematics. Taylor \& Francis, 1994.

\bibitem{RFA}
Walter Rudin.
\newblock {\em Functional Analysis}.
\newblock McGraw-Hill, 1991.

\bibitem{MUNK}
James~R Munkres.
\newblock Topology, 2000.

\end{thebibliography}
%%%%%%%%%%%%%%%%%%%%%%%%%%%%%%%%%%%%%%%%%

\end{document}